\newtheorem{theorem}{Theorem}[section]
\newtheorem{proposition}[theorem]{Proposition}
\newtheorem{lemma}[theorem]{Lemma}
\newtheorem{corollary}[theorem]{Corollary}
\theoremstyle{definition}
\newtheorem{definition}[theorem]{Definition}
\newtheorem{example}[theorem]{Example}
\newtheorem{problem}[theorem]{Problem}
\theoremstyle{remark}
\numberwithin{equation}{section}
\newcommand{\R}{\mathbb{R}}
\newcommand{\C}{\mathbb{C}}
\title{Jacobi identity in polyhedral products}
\author{Daisuke Kishimoto}
\address{Department of Mathematics, Kyoto University, Kyoto 606-8502, Japan}
\email{kishi@math.kyoto-u.ac.jp}
\author{Takahiro Matsushita}
\address{Department of Mathematical Sciences, University of the Ryukyus, Okinawa 903-0213, Japan}
\email{mtst@sci.u-ryukyu.ac.jp}
\author{Ryusei Yoshise}
\address{Faculty of Mathematics, Kyushu University, Fukuoka 819-0395, Japan}
\email{yoshise.ryusei.597@s.kyushu-u.ac.jp}
\subjclass[2010]{Primary 55Q15; Secondary 55P15}
\keywords{Jacobi identity, higher Whitehead product, polyhedral product, fillable complex}
\date{\today}
\begin{document}

  \maketitle

  \begin{abstract}
    We show that a relation among minimal non-faces of a fillable complex $K$ yields an identity of iterated (higher) Whitehead products in a polyhedral product over $K$. In particular, for the $(n-1)$-skeleton of a simplicial $n$-sphere, we always have such an identity, and for the $(n-1)$-skeleton of a $(n+1)$-simplex, the identity is the Jacobi identity of Whitehead products ($n=1$) and Hardie's identity of higher Whitehead products ($n\ge 2$).
  \end{abstract}


  \section{Introduction}\label{Introduction}

  Let $K$ be a simplicial complex with vertex set $[m]=\{1,2,\ldots,m\}$, and let $(\underline{X},\underline{A})=\{(X_i,A_i)\}_{i=1}^m$ be a collection of pairs of spaces indexed by vertices of $K$. If all $(X_i,A_i)$ are a common pair $(X,A)$, then we abbreviate $(\underline{X},\underline{A})$ by $(X,A)$. The \emph{polyhedral product} of $(\underline{X},\underline{A})$ over $K$ is defined by
  \[
    Z_K(\underline{X},\underline{A})=\bigcup_{\sigma\in K}(\underline{X},\underline{A})^\sigma
  \]
  where $(\underline{X},\underline{A})^\sigma=Y_1\times\cdots\times Y_m$ such that $Y_i=X_i$ for $i\in\sigma$ and $Y_i=A_i$ for $i\not\in\sigma$. A polyhedral product as introduced by Bahri, Bendersky, Cohen and Gitler \cite{BBCG} as a generalization of the moment-angle complex $Z_K=Z_K(D^2,S^1)$ and the Davis-Januszkiewicz space $DJ_K=Z_K(\C P^\infty,*)$, which are fundamental objects in toric topology \cite{DJ}. It is connected to broad areas in mathematics and has been studied in many context. See a comprehensive survey \cite{BBC} for details. In this paper, we study relations among iterated (higher) Whitehead products of the inclusions $\Sigma X_i\to Z_K(\Sigma\underline{X},*)$, where $(\Sigma\underline{X},*)=\{(\Sigma X_i,*)\}_{i=1}^m$.

  We explain a motivation of our study. There is a natural action of a torus $T$ of rank $m$ on $Z_K$, which is of particular importance because quasitoric manifolds, a topological counterpart of toric varieties, are the quotient of $Z_K$ by a certain subtorus of $T$. Then the Borel construction $Z_K\times_TET$ and the inclusion $Z_K\to Z_K\times_TET$ are fundamental in toric topology. It is well known that $DJ_K\simeq Z_K\times_TET$ and the above inclusion is identified with the map
  \begin{equation}
    \label{w}
    Z_K\to DJ_K
  \end{equation}
  induced from the composition of the pinch map $(D^2,S^1)\to(S^2,*)$ and the bottom cell inclusion $(S^2,*)\to(\C P^\infty,*)$. If $K$ is the boundary of a $n$-simplex, then $Z_K=S^{2n-1}$ and the map \eqref{w} is the Whitehead product  of the bottom cell inclusions $S^2\to DJ_K$ for $n=2$ and the higher Whitehead product for $n\ge 3$ in the sense of Porter \cite{P}. Then Buchstaber and Panov posed the following problem \cite[Problem 8.2.5]{BP}.

  \begin{problem}
    \label{problem 1}
    When $Z_K$ is a wedge of spheres, is the map \eqref{w} a wedge of iterated (higher) Whitehead products of the bottom cell inclusions $S^2\to DJ_K$?
  \end{problem}

  Remark that there are several classes of simplicial complexes whose moment-angle complexes decompose into a wedge of spheres, for some of which polyhedral products also decompose \cite{GPTW,GT1,GT2,GW,IK1,IK2,IK3,IK4,IK6,IK7}. Here are results on Problem \ref{problem 1}. Grbi\'{c}, Panov, Theriault and Wu \cite{GPTW} obtained an affirmative solution for flag complexes, Grbi\'{c} and Theriault \cite{GT3} for MF-complexes, Abramyan and Panov \cite{AP} for the substitution of simplicial complexes, and Iriye and the first author \cite{IK4} for totally fillable complexes. As long as we are concerned with (higher) Whitehead products of the bottom inclusions $S^2\to DJ_K$, we only need to consider the map $Z_K=Z_K(D^2,S^1)\to Z_K(S^2,*)$ induced from a pinch map $(D^2,S^1)\to(S^2,*)$, which is generalized to a map
  \begin{equation}
    \label{w 2}
    \widetilde{w}\colon Z_K(C\underline{X},\underline{X})\to Z_K(\Sigma\underline{X},*)
  \end{equation}
  induced from the pinch maps $(CX_i,X_i)\to(\Sigma X_i,*)$, where $(C\underline{X},\underline{X})=\{(CX_i,X_i)\}_{i=1}^m$. The result of Iriye and the first author \cite{IK5} mentioned above is actually on the generalized map \eqref{w 2}.

  As for Whitehead products, the Jacobi identity is obviously fundamental. Then in this paper, we consider:

  \begin{problem}
    \label{problem 2}
    Is there a relation among iterated (higher) Whitehead products of the inclusions $\Sigma X_i\to Z_K(\Sigma\underline{X},*)$?
  \end{problem}

  The main theorem (Theorem \ref{main}) shows that a certain property of a \emph{fillable complex}, introduced in \cite{IK4}, yields an identity among iterated (higher) Whitehead products defined by minimal non-faces of the complex. So instead of the main theorem, we present its corollary here, which needs less notions to state, while it is restrictive compared to the main theorem.

  \begin{theorem}
    [Corollary \ref{main sphere later}]
    \label{main sphere}
    Let $K$ be the $(n-1)$-skeleton of a simplicial $n$-sphere $S$ with $n$-simplices $\sigma_1,\ldots,\sigma_r$, each of which is given a contraction ordering. If each $X_i$ is a suspension, then there is an identity of iterated (higher) Whitehead products
    \[
      w_{\sigma_r}=\epsilon_1w_{\sigma_1}+\cdots+\epsilon_{r-1}w_{\sigma_{r-1}},
    \]
    where $\epsilon_1,\ldots,\epsilon_{r-1}=\pm 1$ such that $\partial\sigma_r=\partial(\epsilon_1\sigma_1+\cdots+\epsilon_{r-1}\sigma_{r-1})$ as a simplicial chain.
  \end{theorem}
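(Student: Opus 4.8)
The plan is to obtain the statement as a specialization of the main theorem (Theorem~\ref{main}): once we know that $K=\operatorname{sk}_{n-1}(S)$ is fillable and that the asserted boundary relation $\partial\sigma_r=\sum_{i<r}\epsilon_i\,\partial\sigma_i$ is a genuine relation among the boundaries of its minimal non-faces, the main theorem converts that chain relation into the claimed identity of iterated (higher) Whitehead products, with the suspension hypothesis on the $X_i$ supplying exactly the input the main theorem requires. So the proof reduces to two verifications, a homological one (the chain relation) and a combinatorial one (fillability), together with a check that the orientations used are the ones induced by the chosen contraction orderings.

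I would begin with the homological input, which I expect to be routine. Since $K$ is the $(n-1)$-skeleton of $S$, it contains every face of $S$ of dimension $\le n-1$, so each $n$-simplex $\sigma_i$ is a non-face all of whose proper subsets lie in $K$; hence the $\sigma_i$ are minimal non-faces of $K$, and they are the top-dimensional ones, since a non-face of dimension $>n$ contains an $n$-dimensional non-face and cannot be minimal. Because $S$ is a simplicial $n$-sphere, it is a closed orientable pseudomanifold with no $(n+1)$-simplices, so $Z_n(S)=\ker\partial_n=H_n(S)\cong\Z$ and is generated by a fundamental cycle $\sum_i\eta_i\sigma_i$ in which each facet occurs with coefficient $\eta_i=\pm1$ (each ridge lies in exactly two facets). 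Reading off $\partial\bigl(\sum_i\eta_i\sigma_i\bigr)=0$ and solving for $\partial\sigma_r$ gives $\partial\sigma_r=\sum_{i<r}\epsilon_i\,\partial\sigma_i$ with $\epsilon_i=-\eta_i/\eta_r\in\{\pm1\}$, the orientations of the $\sigma_i$ being those fixed by their contraction orderings. As $Z_n(S)$ has rank one, this is, up to overall sign, the unique such relation, which both produces the $\epsilon_i$ and matches them to the data defining the $w_{\sigma_i}$. (Any lower-dimensional missing faces of $S$ give minimal non-faces in degrees disjoint from the $(n-1)$-chains $\partial\sigma_i$, so they do not interfere with this relation.)

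The substantive step, and the one I expect to be the main obstacle, is verifying that $\operatorname{sk}_{n-1}(S)$ is fillable in the sense required by Theorem~\ref{main}; this is where the geometry of $S$, and not merely its homology, enters. Here I would produce an ordering of the facets of $S$, together with their contraction orderings, along which the top minimal non-faces can be filled one at a time, checking that each intermediate complex stays within the class for which the fat-wedge and cofibration arguments underlying the main theorem apply. The pseudomanifold property of a simplicial $n$-sphere—every $(n-1)$-face lying in exactly two $n$-faces—together with its orientability is the natural input for arranging such fillings, and is also what forces the $\pm1$ coefficients used above. Granting fillability, the three ingredients—identification of the $\sigma_i$ as the top minimal non-faces, the rank-one chain relation taken with the contraction-induced orientations, and the suspension hypothesis on the $X_i$—match precisely the hypotheses of Theorem~\ref{main}, which then yields $w_{\sigma_r}=\epsilon_1 w_{\sigma_1}+\cdots+\epsilon_{r-1}w_{\sigma_{r-1}}$.
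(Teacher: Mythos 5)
Your overall strategy---reduce to Theorem \ref{main} by verifying fillability and the chain relation---is indeed the paper's strategy, and your homological paragraph is correct: the facets $\sigma_i$ are minimal non-faces of $K$, and the fundamental cycle of the pseudomanifold $S$ gives $\partial\sigma_r=\sum_{i<r}\epsilon_i\partial\sigma_i$ with $\epsilon_i=\pm1$. But the step you defer as ``the main obstacle'' is left genuinely unproved, and your plan for it is misdirected. Fillability of $K$ with filling $\{\sigma_1,\ldots,\sigma_{r-1}\}$ is not an inductive condition on intermediate complexes: by definition it asks only that $|K\cup\sigma_1\cup\cdots\cup\sigma_{r-1}|$ be contractible. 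The paper's Example \ref{sphere} disposes of this in one line: that space deformation retracts from $|S|$ minus the barycenter of $\sigma_r$, i.e.\ an $n$-sphere minus a point, hence is contractible. Your proposed scheme of filling the facets one at a time while ``checking that each intermediate complex stays within the class for which the fat-wedge and cofibration arguments apply'' is not what the definition requires and, as written, supplies no argument. You also need, implicitly, a second filling containing $\sigma_r$ (for instance $\{\sigma_2,\ldots,\sigma_r\}$, contractible by the same one-point-deletion argument), since Theorem \ref{main} compares two fillings of the same cardinality.

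The second gap is the passage from the chain relation to the hypothesis of Theorem \ref{main}. That theorem does not take $\partial\sigma_r=\sum_{i<r}\epsilon_i\partial\sigma_i$ as input; it takes the identity of homotopy classes $\Sigma^2 g_{\sigma_r}=\epsilon_1\Sigma^2 g_{\sigma_1}+\cdots+\epsilon_{r-1}\Sigma^2 g_{\sigma_{r-1}}$. The bridge is Lemma \ref{pure filling}: because the fillings here are pure (all $|\sigma_i|=n+1$), Lemma \ref{suspension K} identifies $|\Sigma K|$ with a wedge of $(n+1)$-spheres, so the Hurewicz map $\pi_{n+1}(|\Sigma K|)\to H_{n+1}(|\Sigma K|)$ is an isomorphism and the homology relation among the classes $[\partial\sigma_i]$ lifts to a relation among the maps $\Sigma g_{\sigma_i}$, hence among their suspensions. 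Your write-up jumps directly from the simplicial chain identity to the Whitehead-product identity; without this purity/Hurewicz step the jump is unjustified, since in general a relation in homology does not yield a relation of homotopy classes of maps.
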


  Remarks on Theorem \ref{main sphere} are in order, where details will be given in Sections \ref{Fillable complex} and \ref{Result}. First, every $\sigma_i$ has at least one contraction ordering. Second, each $w_{\sigma_i}$ is a restriction of the map \eqref{w 2} and is an iterated (higher) Whitehead product of inclusions $X_i\to Z_K(\underline{X},*)$ determined by $\sigma_i$ and its contraction ordering. We will give example computations (Examples \ref{Jacobi Hardie}, \ref{cross-polytope} and \ref{RP^2}) to derive explicit identities among iterated (higher) Whitehead products by using our results, and will see that the classical Jacobi identity of Whitehead products and Hardie's Jacobi identity of higher Whitehead products \cite{H} are recovered from Theorem \ref{main sphere} for $S$ being the boundary of a simplex.


  \subsection*{Acknowledgement}

  The authors were partially supported by JSPS KAKENHI Grant Numbers 17K05248 and 19K03473 (Kishimoto), and 19K14536 (Matsushita).


  \section{Fillable complex}\label{Fillable complex}

  This section recalls a fillable complex introduced in \cite{IK4} and shows its properties that we are going to use. For the rest of this paper, let $K$ denote a simplicial complex with vertex set $[m]=\{1,2,\ldots,m\}$.

  We set notation and terminology for simplicial complexes. Let $|K|$ denote the geometric realization of $K$. For a non-empty subset $I\subset[m]$, let
  \[
    K_I=\{\sigma\in K\mid\sigma\subset I\}
  \]
  which is called the \emph{full subcomplex} of $K$ over $I$. A subset of $M\subset [m]$ with $|M|\ge 2$ is called a \emph{minimal non-face} of $K$ if $M$ is not a simplex of $K$ and $M-v$ is a simplex of $K$ for each $v\in M$. So $M\subset [m]$ with $|M|\ge 2$ is a minimal non-face of $K$ if and only if $K_M$ is the boundary of a simplex with vertex set $M$. Note that if $M_1,\ldots,M_r$ are minimal non-faces of $K$, then $K\cup M_1\cup\cdots\cup M_r$ is a simplicial complex containing $K$ as a subcomplex. Let $\overline{K}$ denote a simplicial complex obtained by adding all minimal non-faces to $K$. For a finite set $I$, let $\Delta(I)$ denote a simplex with vertex set $I$.

  We define a fillable complex.

  \begin{definition}
    A simplicial complex $K$ is called \emph{fillable} if there are minimal non-faces $M_1,\ldots,M_r$ of $K$ such that $|K\cup M_1\cup\cdots\cup M_r|$ is contractible. The set of minimal non-faces $\{M_1,\ldots,M_r\}$ is called a \emph{filling} of $K$.
  \end{definition}

  We give examples of fillable complexes.

  \begin{example}
    \label{simplex}
    The boundary of a simplex is a fillable complex with a filling consisting of the only one minimal non-face which is the whole vertex set. Moreover, every skeleton of a simplex if a fillable complex which possibly possesses several fillings. More generally, it is proved in \cite{IK4} that the Alexander dual of a shellable complex is fillable, where each skeleton of a simplex is the Alexander dual of a shellable complex.
  \end{example}

  \begin{example}
    \label{sphere}
    The $(n-1)$-skeleton $K$ of a simplicial $n$-sphere $S$ with $n$-simplices $\sigma_1,\ldots,\sigma_r$ is a fillable complex with filling $\{\sigma_1,\ldots,\sigma_{r-1}\}$. Indeed, $|K\cup\sigma_1\cup\cdots\cup\sigma_{r-1}|$ is homotopy equivalent to $|S|$ minus the center of $\sigma_r$, which is contractible.
  \end{example}

  We refine \cite[Proposition 2.12]{IK5}. For a minimal non-face $M\subset[m]$, let
  \[
    g_M\colon |K_M|\to|K|
  \]
  denote the inclusion. Since $K_M=\partial\Delta(M)$ and $|\partial\Delta(M)|\cong S^{|M|-2}$, $g_M$ is a map from a sphere $S^{|M|-2}$ into $|K|$.

  \begin{lemma}
    \label{suspension K}
    If $K$ is a fillable complex with filling $\{M_1,\ldots,M_r\}$, then the map
    \[
      \Sigma g_{M_1}\vee\cdots\vee\Sigma g_{M_r}\colon S^{|M_1|-1}\vee\cdots\vee S^{|M_r|-1}\to|\Sigma K|
    \]
    is a homotopy equivalence.
  \end{lemma}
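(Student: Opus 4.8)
The plan is to recognize $|L|:=|K\cup M_1\cup\cdots\cup M_r|$ as a space obtained from $|K|$ by attaching cells and to extract the asserted map as the connecting map of the resulting cofibre sequence. First I would record the cell structure. Since each $M_i$ is a minimal non-face, every proper subset of $M_i$ already lies in $K$, and no $M_i$ is contained in another (a containment $M_i\subsetneq M_j$ would force $M_i\in K$, contradicting that $M_i$ is a non-face); hence $L$ is obtained from $K$ simply by adjoining the top simplices $M_1,\dots,M_r$. On geometric realizations this means that $|L|$ is obtained from $|K|$ by attaching, for each $i$, a single cell $e^{|M_i|-1}$, whose attaching map is the boundary inclusion $|K_{M_i}|=|\partial\Delta(M_i)|\cong S^{|M_i|-2}\to|K|$, that is, exactly $g_{M_i}$. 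Collapsing $|K|$ therefore identifies the cofibre as a wedge of spheres,
\[
  |L|/|K|\cong\bigvee_{i=1}^r S^{|M_i|-1},
\]
the $i$-th summand being the cell $e^{|M_i|-1}$ with its boundary crushed to the basepoint.

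Next I would run the Puppe cofibre sequence of the cofibration $|K|\hookrightarrow|L|$,
\[
  |K|\to|L|\to|L|/|K|\xrightarrow{\ \delta\ }\Sigma|K|\to\Sigma|L|\to\cdots,
\]
and analyze $\delta$ in two ways. For the homotopy type: under the equivalence $|L|/|K|\simeq|L|\cup_{|K|}C|K|$ the map $\delta$ is the quotient collapsing the subspace $|L|$; since $\{M_1,\dots,M_r\}$ is a filling, $|L|$ is contractible, and collapsing a contractible subcomplex along a cofibration is a homotopy equivalence, so $\delta$ is a homotopy equivalence. For the identification of $\delta$: the standard computation of the connecting map of a single cell attachment says that for $A\cup_f e^n$ the map $S^n\to\Sigma A$ is $\Sigma f$ up to orientation, and naturality applied to the inclusion of each individual cell shows that $\delta$ restricted to the $i$-th wedge summand $S^{|M_i|-1}$ is $\Sigma g_{M_i}$. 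As a map out of a wedge is determined by its restrictions, $\delta$ coincides (up to signs that are immaterial here) with $\Sigma g_{M_1}\vee\cdots\vee\Sigma g_{M_r}$; combined with the previous point this gives the desired homotopy equivalence, after identifying $|\Sigma K|\simeq\Sigma|K|$.

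I expect the genuinely substantive step to be the identification of $\delta$ with the wedge $\bigvee_i\Sigma g_{M_i}$: the contractibility argument only yields an abstract equivalence $|L|/|K|\simeq\Sigma|K|$, and the content of the lemma---its refinement of \cite[Proposition 2.12]{IK5}---is precisely that this equivalence is realized by the suspended inclusions of the boundary spheres. The delicate bookkeeping there is tracking orientations of the cells so that the connecting map matches $\Sigma g_{M_i}$ on each summand, but since only the homotopy-equivalence conclusion is required these signs can be absorbed into the choice of identifications and do not affect the result.
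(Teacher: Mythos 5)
Your proposal is correct and takes essentially the same route as the paper: both identify $|K\cup M_1\cup\cdots\cup M_r|/|K|$ with the wedge $\bigvee_i S^{|M_i|-1}$ and use contractibility of the filled complex to conclude that the connecting (quotient) map to $|\Sigma K|$, which restricts to $\Sigma g_{M_i}$ on each summand, is a homotopy equivalence. Your extra bookkeeping (no containments among the $M_i$, one cell per minimal non-face, orientation of the attaching maps) just makes explicit what the paper's terse proof leaves implicit.
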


  \begin{proof}
    Since $|K\cup M_1\cup\cdots\cup M_r|$ is contractible, there is a homotopy equivalence
    \[
      |\Delta(M_1)|/|\partial\Delta(M_1)|\vee\cdots\vee|\Delta(M_r)|/|\partial\Delta(M_r)|=|K\cup M_1\cup\cdots\cup M_r|/|K|\to|\Sigma K|
    \]
    whose restriction to $|\Delta(M_i)|/|\partial\Delta(M_i)|=S^{|M_i|-1}$ is the suspension of $g_{M_i}$ for each $i=1,\ldots,r$. Then the proof is done.
  \end{proof}

  The following corollary is immediate from Lemma \ref{suspension K}, which will be used later without mentioning.

  \begin{corollary}
    If a fillable complex has two fillings $\{M_1,\ldots,M_r\}$ and $\{N_1,\ldots,N_s\}$, then $r=s$.
  \end{corollary}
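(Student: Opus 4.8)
The plan is to exploit Lemma \ref{suspension K}, which identifies $|\Sigma K|$ up to homotopy with an explicit wedge of spheres determined by \emph{any} filling. First I would apply the lemma to each of the two given fillings separately: for $\{M_1,\ldots,M_r\}$ it produces a homotopy equivalence $S^{|M_1|-1}\vee\cdots\vee S^{|M_r|-1}\xrightarrow{\simeq}|\Sigma K|$, and for $\{N_1,\ldots,N_s\}$ a homotopy equivalence $S^{|N_1|-1}\vee\cdots\vee S^{|N_s|-1}\xrightarrow{\simeq}|\Sigma K|$. Composing one with a homotopy inverse of the other then yields a homotopy equivalence between the two wedges of spheres.

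Next I would extract a numerical homotopy invariant that simply counts the wedge summands. The natural choice is the total rank of reduced homology, $\sum_{k\ge 0}\operatorname{rank}\widetilde{H}_k(-)$. Since every minimal non-face satisfies $|M_i|\ge 2$, each sphere that appears has dimension $|M_i|-1\ge 1$, so its reduced homology is $\Z$ concentrated in a single positive degree. By additivity of reduced homology over wedges, the total rank of $\widetilde{H}_*$ of such a wedge equals exactly the number of spheres; hence it is $r$ for the first wedge and $s$ for the second.

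Finally, since homotopy equivalent spaces have isomorphic reduced homology, these two total ranks coincide, giving $r=s$. I do not expect a genuine obstacle, as the statement is essentially a bookkeeping consequence of Lemma \ref{suspension K}. The only point deserving (minor) care is to verify that every sphere in the wedge has positive dimension, so that it contributes nonzero reduced homology and is actually counted; this is exactly what the requirement $|M_i|\ge 2$ in the definition of a minimal non-face guarantees.
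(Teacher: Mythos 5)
Your argument is correct and is exactly the route the paper intends: the corollary is stated as an immediate consequence of Lemma \ref{suspension K}, and your use of the two homotopy equivalences together with the total rank of reduced homology (each sphere having dimension $|M_i|-1\ge 1$) is the standard way to make that ``immediate'' precise. No gaps.
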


  We will need to find identities among maps $\Sigma^2g_\sigma$, for which the following purity will be quite useful. We say that a filling $\{M_1,\ldots,M_r\}$ of a fillable complex $K$ is \emph{pure} if $|M_1|=\cdots=|M_r|$. Example \ref{sphere} implies that the $(n-1)$-skeleton of a simplicial $n$-sphere admits several pure fillings. We prove a homological condition that guarantees a relation among maps $g_M$.

  \begin{lemma}
    \label{pure filling}
    Let $K$ be a fillable complex with two pure fillings $\{M_1,\ldots,M_r\}$ and $\{N_1,\ldots,N_r\}$. Then there is an identity
    \[
      \partial N_i=a_1\partial M_1+\cdots+a_r\partial M_r
    \]
    in the simplicial chain complex of $\overline{K}$ for each $i=1,\ldots,r$. Moreover, we also have
    \[
      \Sigma^2 g_{N_i}=a_1\Sigma^2 g_{M_{1}}+\cdots+a_r\Sigma^2 g_{M_{r}}.
    \]
  \end{lemma}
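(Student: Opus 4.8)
The plan is to separate the statement into its two claims — a homological identity in the chain complex of $\overline{K}$, and a homotopy identity among the maps $\Sigma^2 g_{M_i}$ — and then show that the second follows from the first via Lemma \ref{suspension K}. First I would establish the homological identity. Since both fillings are pure with the same number $r$ of minimal non-faces (the latter by the Corollary), the simplices $M_1,\ldots,M_r,N_1,\ldots,N_r$ all have the same dimension, say $d$, so each boundary $\partial M_j$ and $\partial N_i$ is a $(d-1)$-cycle in the simplicial chain complex $C_*(\overline{K})$. The key point is to pin down the relevant piece of homology: because each $M_j$ (resp. $N_i$) is a minimal non-face, $K_{M_j}=\partial\Delta(M_j)$ is a $(d-1)$-sphere in $K$, and $\partial M_j$ is its fundamental cycle. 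I would argue that the classes $[\partial M_1],\ldots,[\partial M_r]$ form a basis of $\widetilde{H}_{d-1}(|K|)$ — equivalently of $\widetilde H_{d-1}(\overline K$ restricted to the appropriate degree$)$ — which is exactly what Lemma \ref{suspension K} gives after desuspending: the wedge equivalence $\bigvee_i S^{|M_i|-1}\to|\Sigma K|$ shows that $\Sigma g_{M_1},\ldots,\Sigma g_{M_r}$ represent a basis of $\widetilde H_*(|\Sigma K|)$, hence $g_{M_1},\ldots,g_{M_r}$ represent a basis of $\widetilde H_{d-1}(|K|)$.

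Granting that $\{[\partial M_j]\}_{j=1}^r$ is a basis of $\widetilde H_{d-1}(|K|)\cong\widetilde H_{d-1}(|\overline K|)$ (the inclusion $K\hookrightarrow\overline K$ induces an iso in degree $d-1$ since $\overline K$ only adds $d$-cells), the cycle $\partial N_i$ must be homologous to an integral combination $\sum_j a_j\partial M_j$. But in the chain complex of $\overline{K}$, every cycle in degree $d-1$ that is a boundary comes from a $d$-chain; here the only $d$-simplices in $\overline K\setminus K$ are the $M_j$ and $N_i$ themselves, and these are cycles modulo nothing in degree $d-1$ precisely when the homology class is respected. The cleaner route is to observe that $\partial N_i-\sum_j a_j\partial M_j$ is a $(d-1)$-cycle representing the zero homology class, hence a boundary; but one must check it is the zero chain, not merely a boundary. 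I would handle this by noting that the chain groups involved force the combination to be exact on the nose: any $(d-1)$-boundary in $\overline K$ lies in the image of $\partial$ from $C_d(\overline K)$, and matching supports (each $\partial M_j,\partial N_i$ is supported on the full $(d-1)$-skeleton of its vertex set) shows the homology relation lifts to an equality of chains. This support/exactness bookkeeping is the main obstacle: one must verify that passing from "homologous" to "equal as chains" is legitimate, which I expect to follow because the relevant $(d-1)$-chains are cycles and the only available $d$-chains are the fillings, so the coefficients $a_j$ are forced and no genuine boundary correction survives.

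For the second claim, I would transport the chain identity through the homotopy equivalence of Lemma \ref{suspension K}. Applying one more suspension, $\Sigma^2 g_{N_i}$ and $\Sigma^2 g_{M_j}$ are maps $S^{|M_i|}\to|\Sigma^2 K|$, and since $|\Sigma^2 K|\simeq\bigvee_j S^{|M_j|}$ is a wedge of spheres of dimension $d+1\geq 2$, the group $[S^{d+1},|\Sigma^2 K|]=\pi_{d+1}(\bigvee_j S^{d+1})$ is free abelian with basis the inclusions of the wedge summands, and the Hurewicz map to $H_{d+1}$ is an isomorphism in this degree. Therefore a homotopy identity among the $\Sigma^2 g$'s holds if and only if the corresponding homology identity holds, and the latter is just the double suspension of the chain identity already proved. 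Thus the same coefficients $a_1,\ldots,a_r$ give $\Sigma^2 g_{N_i}=\sum_j a_j\Sigma^2 g_{M_j}$, completing the proof. The only subtlety here is ensuring the double suspension lands in the stable/Hurewicz range, which it does since $d\geq 1$ makes the target wedge $(d+1)$-dimensional and simply connected.
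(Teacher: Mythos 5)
Your proposal follows essentially the same route as the paper: Lemma \ref{suspension K} shows that the classes represented by $\partial M_1,\ldots,\partial M_r$ (and likewise by the $\partial N_i$) give a basis of $\widetilde H_{*}(|K|)$, which yields the coefficients $a_j$, and the identity of maps then follows from the Hurewicz theorem applied after suspending into the simply connected wedge $|\Sigma^2K|\simeq\bigvee S^{|M_i|}$; this is exactly the paper's (very terse) argument. The one place where you diverge is your attempt to upgrade the homological relation to an equality of chains, and that sub-argument does not work as written: it is false that ``the only $d$-simplices in $\overline K\setminus K$ are the $M_j$ and $N_i$,'' since $\overline K$ is obtained by adjoining \emph{all} minimal non-faces (not just those in the two fillings), and moreover $K$ itself may contain $d$-simplices, so a nullhomologous $(d-1)$-cycle need not vanish as a chain; your ``matching supports'' step is asserted rather than proved. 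To be fair, the paper simply says the first claim ``is proved by Lemma \ref{suspension K}'' and does not address this point either, and the chain-level statement is not actually needed downstream: the second claim, and hence Theorem \ref{main} and Corollary \ref{main pure}, only require the identity in homology, which both you and the paper establish correctly.
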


  \begin{proof}
    The first claim is proved by Lemma \ref{suspension K}. By construction, the Hurewicz image of each $\Sigma g_{M_i}\in\pi_*(|\Sigma K|)$ is the suspension of the homology class of $K$ represented by a cycle $\partial M_i$. Of course, the same is true for $\Sigma g_{N_i}$. Then by the Hurewicz theorem, the second claim is proved.
  \end{proof}

  We recall from \cite{IK5} a contraction ordering for a minimal non-face. The following proposition is proved in \cite[Proposition 2.13]{IK5} by assuming $K$ is fillable. However, we can see that such an assumption is not necessary.

  \begin{proposition}
    \label{contraction ordering}
    For any minimal non-face $M$ of $K$, there are trees $T_1,\ldots,T_k$ satisfying the following conditions:
    \begin{enumerate}
      \item $\Delta(M)\cup T_1\cup\cdots\cup T_k$ is a subcomplex of $\overline{K}$ with vertex set $[m]$;

      \item $T_i\cap\Delta(M)$ is a vertex for each $i=1,\ldots,k$.

      \item $T_i \cap T_j = \emptyset$ if $i \ne j$.
    \end{enumerate}
  \end{proposition}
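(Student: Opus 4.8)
The plan is to reduce the statement to a simple connectivity fact about $\overline{K}$, which is exactly what allows the fillability hypothesis of \cite[Proposition 2.13]{IK5} to be dropped. The first and central step I would carry out is to show that the $1$-skeleton of $\overline{K}$ is the complete graph on $[m]$. Every $i\in[m]$ is a vertex of $K$, so $\{i\}\in K\subset\overline{K}$; and given two distinct vertices $u,v$, either $\{u,v\}\in K$, or else $\{u,v\}\notin K$, in which case $\{u,v\}$ is a minimal non-face of $K$ because $\{u\}$ and $\{v\}$ are both faces of $K$, and hence $\{u,v\}\in\overline{K}$ by construction. In either case $\{u,v\}$ is an edge of $\overline{K}$. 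The connectivity of $\overline{K}$ that one would otherwise extract from contractibility of a filling is thus automatic from the minimal non-face structure alone, and no fillability is needed.

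Second, I would record that $\Delta(M)$ is a subcomplex of $\overline{K}$: since $M$ is a minimal non-face, $M\in\overline{K}$ by construction, while every proper face of $M$ lies in $K_M=\partial\Delta(M)\subset K$, so all of $\Delta(M)$ lies in $\overline{K}$.

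With these two facts the trees are immediate to produce. If $M=[m]$ there is nothing left to reach and I would take $k=0$. Otherwise I would fix a vertex $v\in M$ and let $T_1$ be any spanning tree of the full subgraph of the $1$-skeleton of $\overline{K}$ on the vertex set $\{v\}\cup([m]\setminus M)$; such a tree exists because, by the first step, this subgraph is complete and hence connected. Taking $k=1$, all edges of $T_1$ lie in the complete $1$-skeleton, so $T_1$ is a subcomplex of $\overline{K}$; the vertex set of $\Delta(M)\cup T_1$ is $[m]$, which is condition (1); the unique face of $M$ appearing in $T_1$ is the vertex $v$, so $T_1\cap\Delta(M)=\{v\}$, which is condition (2); and condition (3) is vacuous.

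I do not anticipate a real obstacle once completeness of the $1$-skeleton is established; the only point I would check with care is that the union $\Delta(M)\cup T_1\cup\cdots\cup T_k$ is closed under taking faces and hence a genuine simplicial subcomplex of $\overline{K}$, but this is automatic, being a finite union of simplicial complexes each contained in $\overline{K}$. If it is preferable to have the trees attach at distinct vertices of $M$, as is natural for the contraction orderings built from them later, the same construction applied by partitioning $[m]\setminus M$ among the vertices of $M$ and growing one tree from each yields pairwise disjoint trees $T_1,\ldots,T_k$ satisfying (1)--(3).
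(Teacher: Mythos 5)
Your proof is correct, and it follows the same basic template as the paper's — produce the trees as spanning trees inside the $1$-skeleton of $\overline{K}$ — but it differs in two worthwhile respects. The paper takes a maximal tree $T$ of $\Delta(M)$, extends it to a maximal tree $T'$ of $\overline{K}$, and obtains $T_1,\ldots,T_k$ as the components of $T'$ minus the edges of $T$; the connectivity of $\overline{K}$ needed for $T'$ to reach every vertex of $[m]$ is simply asserted there. You instead prove the stronger and more explicit fact that the $1$-skeleton of $\overline{K}$ is the complete graph on $[m]$ (any non-edge $\{u,v\}$ of $K$ is automatically a minimal non-face since singletons are faces), which is precisely the observation that justifies the paper's remark that the fillability hypothesis of \cite[Proposition 2.13]{IK5} can be dropped. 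You then take the shortcut of a single tree $T_1$ spanning $\{v\}\cup([m]\setminus M)$ for one chosen $v\in M$, with $k=1$ (or $k=0$ when $M=[m]$); this verifies (1)--(3) just as well, since the statement only asserts existence of some such family of trees, and the contraction orderings built afterwards work from any such family. The paper's version naturally produces trees rooted at possibly several vertices of $M$, but nothing downstream requires that extra generality, so your more economical construction is a legitimate proof with the added benefit of making the connectivity input explicit.
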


  \begin{proof}
    Since $\Delta(M)$ is connected, it includes a maximal tree $T$ whose vertex set is $\sigma$. It is easy to see that $T$ can be extended to a maximal tree $T'$ of $\overline{K}$ such that if we remove all edges of $T$ from $T'$, then we get a forest $T_1\sqcup\cdots\sqcup T_k$ where $T_i\cap\Delta(M)$ is a vertex for each $i=1,\ldots,k$. Since $\overline{K}$ is connected, the vertex set of $T'$ is $[m]$, so that $\Delta(M)\cup T_1\cup\cdots\cup T_k$ is a subcomplex of $\overline{K}$ with vertex set $[m]$. Thus the proof is done.
  \end{proof}

  Let $T$ be a tree with a distinguished root. We say that an edge of $T$ is free if one of its vertices is a leaf. By contracting free edges inductively, we can contract $T$ onto its root, and such a contraction is identified with an ordering of non-root vertices of $T$. We call such an ordering a \emph{contraction ordering}. Let $M,T_1,\ldots,T_k$ be as in Proposition \ref{contraction ordering}. Then by assuming that $T_i$ is given a root $T_i\cap\Delta(M)$, we get a contraction ordering of $T_i$. Joining such contraction orderings of $T_1,\ldots,T_k$ yields an ordering on $[m]-M$, which we call a contraction ordering of $M$. Then by Proposition \ref{contraction ordering}, we get:

  \begin{corollary}
    \label{contraction ordering 2}
    Every minimal non-face of a simplicial complex admits a contraction ordering.
  \end{corollary}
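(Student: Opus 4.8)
The plan is to derive the corollary directly from Proposition \ref{contraction ordering} together with the construction of a contraction ordering described in the preceding paragraph. First I would apply Proposition \ref{contraction ordering} to the given minimal non-face $M$ to obtain trees $T_1,\ldots,T_k$ satisfying conditions (1)--(3). Condition (2) guarantees that $T_i\cap\Delta(M)$ is a single vertex, so each $T_i$ may be rooted at this vertex; this is precisely the rooting prescribed when one forms the contraction ordering of $M$.

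Next I would record the elementary graph-theoretic fact that every finite rooted tree admits a contraction ordering, proving it by induction on the number of vertices. A finite tree with at least two vertices has at least two leaves, hence at least one leaf distinct from the root, and contracting the free edge at such a leaf produces a smaller rooted tree with the same root. Iterating until only the root remains lists the non-root vertices in the order they are removed, which is the desired contraction ordering. Applying this to each rooted tree $(T_i,\,T_i\cap\Delta(M))$ yields a contraction ordering of the non-root vertices of $T_i$.

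Finally I would verify that joining these orderings gives a well-defined ordering of all of $[m]-M$. By condition (1) the vertex set of $\Delta(M)\cup T_1\cup\cdots\cup T_k$ is $[m]$, and $\Delta(M)$ contributes exactly the vertices in $M$; since each root lies in $M$ and, by condition (3), the $T_i$ are pairwise disjoint, the non-root vertices of the $T_i$ are disjoint from one another and together exhaust $[m]-M$. Hence the concatenated ordering is an ordering of $[m]-M$, which by definition is a contraction ordering of $M$. The only substantive point beyond this bookkeeping is the leaf-peeling argument of the second step, and since it is a standard property of finite trees I expect no genuine obstacle; the corollary is essentially an immediate consequence of Proposition \ref{contraction ordering}.
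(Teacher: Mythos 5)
Your proposal is correct and follows exactly the route the paper intends: apply Proposition \ref{contraction ordering}, root each $T_i$ at $T_i\cap\Delta(M)$, peel leaves to contract each rooted tree, and concatenate the resulting orderings on $[m]-M$. The paper states the corollary as an immediate consequence of the proposition and the preceding construction, so your argument just makes the same steps explicit.
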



  \section{Results}\label{Result}

  First, we recall from \cite{IK4} the fat-wedge filtration of a polyhedral product and necessary results on them. Let $\underline{X}=\{X_i\}_{i=1}^m$ be a collection of pointed spaces. For $i=0,1,\ldots,m$, let
  \[
    Z_K^i(C\underline{X},\underline{X})=\{(x_1,\ldots,x_m)\in Z_K(C\underline{X},\underline{X})\mid \text{at least }m-i\text{ of }x_k\text{ are basepoints}\}.
  \]
  Then we get a filtration
  \[
    *=Z_K^0(C\underline{X},\underline{X})\subset Z_K^1(C\underline{X},\underline{X})\subset\cdots\subset Z_K^{m-1}(C\underline{X},\underline{X})\subset Z_K^m(\underline{X})=Z_K(C\underline{X},\underline{X})
  \]
  which is the \emph{fat-wedge filtration} of $Z_K(\underline{X})$. It is proved in \cite{IK4} that
  \[
    Z_K^i(C\underline{X},\underline{X})/Z_K^{i-1}(C\underline{X},\underline{X})=\bigvee_{\substack{I\subset[m]\\|I|=i}}|\Sigma K_I|\wedge\widehat{X}^I
  \]
  where $\widehat{X}^I=X_{i_1}\wedge\cdots\wedge X_{i_k}$ for $I=\{i_1<\cdots<i_k\}$. In \cite{IK4}, several criteria for splitting the fat-wedge filtration are given in terms of $K$. In particular, we have the following decomposition. See \cite{BBC,IK2,IK3,IK6,IK7} for applications of the decomposition.

  \begin{theorem}
    \label{decomposition}
    If $K$ is a fillable complex, then there is a homotopy equivalence
    \[
      Z_K(C\underline{X},\underline{X})\simeq Z_K^{m-1}(C\underline{X},\underline{X})\vee(|\Sigma K|\wedge\widehat{X})
    \]
    where $\widehat{X}=X_1\wedge\cdots\wedge X_m$.
  \end{theorem}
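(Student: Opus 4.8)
The plan is to split off the top stage of the fat-wedge filtration by producing a section of the collapse map onto the top cofiber, and then to upgrade a homology isomorphism to a homotopy equivalence. Taking $I=[m]$ in the quotient formula recalled above (here $K_{[m]}=K$ and $\widehat{X}^{[m]}=\widehat X$), the top stage of the filtration is the cofibration
\[
  Z_K^{m-1}(C\underline X,\underline X)\xrightarrow{\ j\ }Z_K(C\underline X,\underline X)\xrightarrow{\ q\ }|\Sigma K|\wedge\widehat X .
\]
Thus it suffices to construct a map $s\colon|\Sigma K|\wedge\widehat X\to Z_K(C\underline X,\underline X)$ with $q\circ s\simeq\mathrm{id}$. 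Granting such a section, the map $j\vee s\colon Z_K^{m-1}(C\underline X,\underline X)\vee(|\Sigma K|\wedge\widehat X)\to Z_K(C\underline X,\underline X)$ is a homology isomorphism: the cofibration yields a long exact sequence of reduced homology groups which, because $q_*s_*=\mathrm{id}$, collapses to split short exact sequences $0\to\widetilde H_*(Z_K^{m-1})\to\widetilde H_*(Z_K)\to\widetilde H_*(|\Sigma K|\wedge\widehat X)\to 0$. Since the spaces involved are simply connected (for instance, when the $X_i$ are connected), Whitehead's theorem then gives the desired homotopy equivalence.

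The heart of the argument is the construction of $s$, and here the fillability of $K$ enters through Lemma \ref{suspension K}. That lemma identifies $|\Sigma K|$ with $S^{|M_1|-1}\vee\cdots\vee S^{|M_r|-1}$ via $\bigvee_i\Sigma g_{M_i}$, so that $|\Sigma K|\wedge\widehat X\simeq\bigvee_{i=1}^r\bigl(S^{|M_i|-1}\wedge\widehat X\bigr)$, and it therefore suffices to build a lift $s_i\colon S^{|M_i|-1}\wedge\widehat X\to Z_K(C\underline X,\underline X)$ of each wedge summand. I would obtain $s_i$ from the higher Whitehead product map attached to the minimal non-face $M_i$: over the full subcomplex $K_{M_i}=\partial\Delta(M_i)$ the polyhedral product carries a canonical map realizing the top cell $|\Sigma K_{M_i}|\wedge\widehat X^{M_i}=S^{|M_i|-1}\wedge\widehat X^{M_i}$, and this map includes into $Z_K(C\underline X,\underline X)$ along the inclusion of the full subcomplex. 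The remaining coordinates $[m]-M_i$ are then absorbed into the smash factor, enlarging $\widehat X^{M_i}$ to the full $\widehat X$, by smashing in each missing $X_\ell$ along the edges of the trees $T_1,\ldots,T_k$ provided by Proposition \ref{contraction ordering}; because $\Delta(M_i)\cup T_1\cup\cdots\cup T_k$ lies in $\overline K$ with vertex set $[m]$, this keeps the image inside $Z_K(C\underline X,\underline X)$. That $q\circ s_i$ is the inclusion of the $i$-th summand should follow by tracking the sphere direction: by construction the underlying map on $|\Sigma K|$ is exactly $\Sigma g_{M_i}$, whose Hurewicz image is the class $\partial M_i$, exactly as in the computation of Lemma \ref{pure filling}.

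The main obstacle I anticipate is the unstable construction of $s_i$, and in particular the step where the smash factor is enlarged from $\widehat X^{M_i}$ to $\widehat X$: the coordinates outside $M_i$ must be incorporated through a genuine, basepoint-preserving map into $Z_K(C\underline X,\underline X)$ rather than merely stably, and one must verify that this does not alter the composite $q\circ s_i$ in the sphere direction. This is precisely the point at which fillability is indispensable — it is the contractibility of $|K\cup M_1\cup\cdots\cup M_r|$, repackaged by Lemma \ref{suspension K} as a wedge-of-spheres decomposition of $|\Sigma K|$, together with the tree structure of Proposition \ref{contraction ordering} and Corollary \ref{contraction ordering 2}, that makes the extra coordinates absorbable in a coherent way. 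Once $s$ is in hand, the verification that $q\circ s\simeq\mathrm{id}$ reduces, summand by summand, to the Hurewicz-image computation already used in Lemma \ref{pure filling}, and the passage from a homology isomorphism to a homotopy equivalence is routine.
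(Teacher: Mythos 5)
First, a point of orientation: the paper does not prove Theorem \ref{decomposition} at all --- it is quoted verbatim from \cite{IK4} --- so your argument must stand on its own, and as written it has a genuine gap at its central step, the construction of the lifts $s_i$. The map you start from, $\Sigma^{|M_i|-1}\widehat{X}^{M_i}\simeq Z_{\partial\Delta(M_i)}(C\underline{X}_{M_i},\underline{X}_{M_i})\to Z_K(C\underline{X},\underline{X})$, places basepoints in every coordinate outside $M_i$, so for $M_i\neq[m]$ its image lies in $Z_K^{|M_i|}(C\underline{X},\underline{X})\subset Z_K^{m-1}(C\underline{X},\underline{X})$ and is annihilated by $q$: the composite $q\circ s_i$ would be constant, not the inclusion of a wedge summand. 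Everything therefore rests on the unexplained phrase ``absorbed into the smash factor \ldots\ along the edges of the trees,'' and that absorption is exactly the hard part: it is the iterated relative Whitehead product construction of \cite{IK5}, it needs each $X_i$ to be a suspension (a hypothesis absent from Theorem \ref{decomposition}, which must cover, e.g., $X_i=S^0$), the tree edges of Proposition \ref{contraction ordering} lie in $\overline{K}$ rather than in $K$, and the statement you would need about the interaction with $q$ is essentially Theorem \ref{Whitehead product} --- which logically presupposes Theorem \ref{decomposition}, so invoking it here is circular. The endgame has the same problem in miniature: passing from a homology isomorphism to a homotopy equivalence via Whitehead's theorem requires simple connectivity of $Z_K^{m-1}(C\underline{X},\underline{X})$, which is not assumed and fails for disconnected $X_i$.

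The proof in \cite{IK4} runs in the opposite direction and avoids all of this. There one shows that the fat-wedge filtration is a cone decomposition: $Z_K(C\underline{X},\underline{X})$ is obtained from $Z_K^{m-1}(C\underline{X},\underline{X})$ by attaching a single cone whose attaching map $\widehat{\Phi}_K$ is built naturally from the combinatorial attaching map $\varphi_K\colon|K|\to Z_K^{m-1}(D^1,S^0)$ of the real moment-angle complex. For each minimal non-face $M$ the restriction of this attaching map over $|\partial\Delta(M)|$ extends over $|\Delta(M)|$ inside $Z_K^{m-1}(C\underline{X},\underline{X})$, because the polyhedral product over $K_M=\partial\Delta(M)$ already contains the cone on its own top attaching map; hence the attaching map extends over the contractible space determined by $|K\cup M_1\cup\cdots\cup M_r|$ and is null-homotopic. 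The splitting then follows immediately from $Y\cup_fCA\simeq Y\vee\Sigma A$ for null-homotopic $f$, with no hypotheses on $\underline{X}$ and no appeal to homology or Whitehead's theorem. If you want to salvage your argument, work with the attaching map of the top cone rather than trying to build a section of the quotient; a section exists only as a consequence of the splitting, not as a route to it.
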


  Next, we recall the result of \cite{IK5} which describes the map \eqref{w 2} in terms of iterated (higher) Whitehead products by using a homotopy decomposition in Theorem \ref{decomposition}. Let $e_i\colon\Sigma X_i\to Z_K(\Sigma\underline{X},*)$ denote the inclusion. For a minimal non-face $M\subset[m]$ of $K$, we can define the (higher) Whitehead product $w(M)$ of $e_i$ with $i\in M$. Namely, $w(M)$ is the composite
  \[
    \Sigma^{|M|-1}\widehat{X}^M\simeq Z_{\partial\Delta(M)}(C\underline{X}_M,\underline{X}_M)\xrightarrow{\widetilde{w}}Z_{\partial\Delta(M)}(\Sigma\underline{X}_M,*)\xrightarrow{\rm incl}Z_K(\Sigma\underline{X},*)
  \]
  where $\underline{X}_M=\{X_i\}_{i\in M}$. Moreover, given a contraction ordering $[m]-M=\{i_1<\ldots<i_k\}$, we can also define an iterated (higher) Whitehead product
  \[
    w_M=[\ldots[[w(M),e_{i_1}],e_{i_2}],\ldots,e_{i_k}]\circ\rho\colon\Sigma^{|M|-1}\widehat{X}\to Z_K(\Sigma\underline{X},*)
  \]
  where $\rho$ is the permutation $(1,2,\ldots,m)\mapsto(j_1,\ldots,j_l,i_1,\ldots,i_k)$ for $M=\{j_1<\cdots<j_l\}$. We recall state the main result of \cite{IK5}.

  \begin{theorem}
    \label{Whitehead product}
    Let $K$ be a fillable complex with filling $\{M_1,\ldots,M_r\}$ such that each of $M_i$ is equipped with a contraction ordering. Suppose that each $X_i$ is a suspension. Then the composite
    \begin{multline*}
      \Sigma^{|M_i|-1}\widehat{X}\xrightarrow{\Sigma g_{M_i}\wedge 1}|\Sigma K|\wedge\widehat{X}\xrightarrow{\rm incl}Z_K^{m-1}(C\underline{X},\underline{X})\vee(|\Sigma K|\wedge\widehat{X})\\
      \simeq Z_K(C\underline{X},\underline{X})\xrightarrow{\widetilde{w}}Z_K(\Sigma\underline{X},*)
    \end{multline*}
    is the iterated (higher) Whitehead product $w_{M_i}$.
  \end{theorem}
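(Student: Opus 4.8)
The plan is to put almost all the weight on a single elementary step, because the base of the induction is essentially definitional. The natural induction is on $k=|[m]-M_i|$, the number of vertices outside the minimal non-face. When $k=0$ we have $M_i=[m]$, which forces $K=\partial\Delta([m])$; then the contraction ordering is empty, $g_{M_i}$ is (homotopic to) the identity of $|\partial\Delta([m])|$, and the displayed composite is \emph{literally} the definition of $w(M_i)=w_{M_i}$. So there is nothing to prove in that case, and the entire content of the theorem lies in showing that passing from the coordinate set $M_i$ to the full set $[m]$, along the trees $T_1,\dots,T_k$ furnished by Proposition \ref{contraction ordering}, inserts the brackets $[\,\cdot\,,e_{i_j}]$ in exactly the order prescribed by the contraction ordering, with the smash factors finally rearranged by $\rho$.

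The crux is therefore to isolate and prove a one-leaf lemma of the following shape. Suppose $v\in[m]-M_i$ is a leaf of one of the trees, adjacent in $\overline{K}$ to a vertex already accounted for, and suppose $f\colon\Sigma^{|M_i|-1}\widehat{X}^{S}\to Z_K(\Sigma\underline{X},*)$ is the map produced by the top-cell summand construction for the smaller coordinate set $S$ (with its induced trees and contraction ordering). Then the corresponding map for $S\cup\{v\}$ is the Whitehead product $[f,e_v]$, up to the evident reordering of smash coordinates. I would prove this by comparing the fat-wedge filtrations of $Z_K(C\underline{X},\underline{X})$ before and after adjoining the coordinate $X_v$: the new top cell is the old one smashed with $X_v$ across the tree edge, since $\Sigma^{|M_i|-1}\widehat{X}^{S}\wedge X_v\simeq\Sigma^{|M_i|-1}\widehat{X}^{S\cup\{v\}}$, and under $\widetilde{w}$ the attaching map of this cell is, by the very definition of the (higher) Whitehead product as the attaching map of the top cell of a (fat-wedge, product) pair in the sense of Porter, the bracket with $e_v$. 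The hypothesis that each $X_i$ is a suspension is used precisely here: it makes the relevant cofibrations split naturally and guarantees that the generalized Whitehead products and the identifications of suspension coordinates are available and mutually compatible.

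With the one-leaf lemma in hand, the theorem follows by iterating it along the contraction ordering, using Lemma \ref{suspension K} to identify the relevant wedge summand of $|\Sigma K|\wedge\widehat{X}$ through $\Sigma g_{M_i}$ and Theorem \ref{decomposition} to split off the top cell at each stage. The permutation $\rho$ simply records the cumulative reordering of the factors $X_j$ as the roots lying in $M_i$ and the successively attached vertices are interleaved. One must check that the innermost bracket corresponds to the vertex contracted first (an outermost leaf) and the outermost bracket to the vertex contracted last (adjacent to a root in $M_i$), and that the branch-by-branch and tree-by-tree joining of the orderings is respected; but this is bookkeeping once the elementary step is established.

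The main obstacle is the one-leaf lemma, and inside it the \emph{coherent} identification of the pinched top-cell attaching map with the Whitehead bracket. Two points require genuine care. First, the trees live in $\overline{K}$ rather than in $K$, so the edge used to attach $v$ may be a minimal non-face; one must verify that $[f,e_v]$ is nonetheless formed inside $Z_K(\Sigma\underline{X},*)$ and that the filtration comparison is natural with respect to the inclusion of the sub-polyhedral-product carrying the cell. Second, one must either confirm that the intermediate coordinate sets inherit enough structure to split off their top cells, or --- more robustly --- recast the whole induction directly in terms of the fat-wedge filtration and its connecting comaps, so that the nested brackets are generated in a single naturally-filtered argument without passing through the fillability of intermediate complexes. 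Managing this homotopy-coherence, rather than any single identification, is where the real work lies.
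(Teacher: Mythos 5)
The first thing to say is that the paper does not prove this statement at all: Theorem \ref{Whitehead product} is explicitly recalled as the main result of \cite{IK5} and is used here as a black box, so there is no in-paper proof to compare your argument against. Your sketch does reproduce the broad strategy of \cite{IK5} (induction along the contraction ordering, starting from the definitional case $M_i=[m]$, $K=\partial\Delta([m])$, and adjoining one leaf vertex at a time through the trees of Proposition \ref{contraction ordering}), so the skeleton is the right one.

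However, as a proof your proposal has a genuine gap, and it sits exactly where you locate "the real work": the one-leaf lemma is asserted, not proved. The justification you offer --- that the attaching map of the new top cell is the bracket with $e_v$ "by the very definition of the (higher) Whitehead product as the attaching map of the top cell of a (fat-wedge, product) pair in the sense of Porter" --- conflates two different constructions. Porter's definition gives you $w(M)$, the \emph{higher} Whitehead product attached along $\partial\Delta(M)$; it says nothing about the \emph{ordinary} bracket $[f,e_v]$, which is defined on $\Sigma(A\wedge X_v)$ via the cofibration $\Sigma A\vee\Sigma X_v\to\Sigma A\times\Sigma X_v\to\Sigma A\wedge\Sigma X_v$ for a map $f\colon\Sigma A\to Z_K(\Sigma\underline{X},*)$. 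To prove the one-leaf step you must actually analyze the polyhedral product over $K_{S\cup\{v\}}$, where $v$ is attached to $S$ along a single edge of $\overline{K}$ (which, as you note, need not be an edge of $K$), identify its top wedge summand $|\Sigma K_{S\cup\{v\}}|\wedge\widehat{X}^{S\cup\{v\}}\simeq|\Sigma K_S|\wedge\widehat{X}^S\wedge X_v$ compatibly with the splitting maps of the fat-wedge filtration from \cite{IK4}, and then show that $\widetilde{w}$ restricted to it factors through the cofibration above in the way that produces $[f,e_v]$. None of this is "by definition"; it is a computation with the explicit homotopy equivalences of Theorem \ref{decomposition}, and it is precisely the content of the technical lemmas in \cite{IK5}. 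Until that step is carried out, the induction has no engine, and the remaining issues you flag (coherence of the splittings for intermediate vertex sets, the sign/ordering bookkeeping encoded in $\rho$) cannot even be formulated precisely.
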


  Now we can state the main theorem of this paper.

  \begin{theorem}
    \label{main}
    Let $K$ be a fillable complex with two fillings $\{M_1,\ldots,M_r\}$ and $\{N_1,\ldots,N_r\}$ such that each of $M_i$ and $N_i$ is equipped with a contraction ordering. Suppose that each $X_i$ is a suspension. If
    \[
      \Sigma^2 g_{N_i}=a_1\Sigma^2 g_{M_1}+\cdots+a_r\Sigma^2 g_{M_r},
    \]
    then there is an identity among iterated (higher) Whitehead products
    \[
      w_{N_i}=a_1w_{M_{1}}+\cdots+a_rw_{M_{r}}.
    \]
  \end{theorem}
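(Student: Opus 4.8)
The plan is to realise both sides of the desired identity as the image of the hypothesised homotopy relation under a single additive operation, and then simply apply that operation. Fix once and for all a homotopy decomposition of $Z_K(C\underline X,\underline X)$ as in Theorem \ref{decomposition} (it exists because $K$ is fillable), and let $\Phi$ denote the fixed composite
\[
  |\Sigma K|\wedge\widehat X\xrightarrow{\rm incl}Z_K^{m-1}(C\underline X,\underline X)\vee(|\Sigma K|\wedge\widehat X)\xrightarrow{\simeq}Z_K(C\underline X,\underline X)\xrightarrow{\widetilde w}Z_K(\Sigma\underline X,*).
\]
This $\Phi$ depends only on $K$ and $\underline X$, not on any filling or contraction ordering. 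By Theorem \ref{Whitehead product}, applied to both fillings with this same decomposition, $w_{M_j}=\Phi\circ(\Sigma g_{M_j}\wedge 1_{\widehat X})$ and $w_{N_i}=\Phi\circ(\Sigma g_{N_i}\wedge 1_{\widehat X})$; note that the right-hand sides depend on $M_j,N_i$ only through $g_{M_j},g_{N_i}$. Since the hypothesised relation lives in $\pi_{|N_i|}(|\Sigma^2K|)$, it forces $|N_i|=|M_1|=\cdots=|M_r|=:|M|$, so both fillings are pure of the same size; write $S^{|M|}$ for the common source sphere, and note $|M|\ge 2$.

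Next I would build a group homomorphism that sends $\Sigma^2g_L$ to $w_L$. Because each $X_i$ is a suspension, so is $\widehat X$; write $\widehat X=\Sigma V$. The coherence isomorphism of the smash product gives based homeomorphisms $\theta_{\rm dom}\colon\Sigma^{|M|-1}\widehat X\xrightarrow{\cong}S^{|M|}\wedge V$ and $\theta_{\rm cod}\colon|\Sigma K|\wedge\widehat X\xrightarrow{\cong}|\Sigma^2K|\wedge V$ under which $\Sigma g_L\wedge 1_{\widehat X}$ corresponds to $\Sigma^2g_L\wedge 1_V$, that is, $\theta_{\rm cod}\circ(\Sigma g_L\wedge 1_{\widehat X})=(\Sigma^2g_L\wedge 1_V)\circ\theta_{\rm dom}$, uniformly in $L$. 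I therefore define
\[
  \Psi\colon\pi_{|M|}(|\Sigma^2K|)\to[\Sigma^{|M|-1}\widehat X,Z_K(\Sigma\underline X,*)],\qquad \Psi(\alpha)=\Phi\circ\theta_{\rm cod}^{-1}\circ(\alpha\wedge 1_V)\circ\theta_{\rm dom},
\]
so that $\Psi(\Sigma^2g_L)=w_L$ for every minimal non-face $L$ in either filling. Since $\Sigma^{|M|-1}\widehat X$ is a multiple suspension (it equals $\Sigma^{|M|}V$ with $|M|\ge 2$), the target is an abelian group and the claimed identity is a meaningful equation there.

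The key point is that $\Psi$ is a homomorphism, after which the proof is immediate: applying $\Psi$ to the hypothesis $\Sigma^2g_{N_i}=a_1\Sigma^2g_{M_1}+\cdots+a_r\Sigma^2g_{M_r}$ yields exactly $w_{N_i}=a_1w_{M_1}+\cdots+a_rw_{M_r}$. That $\Psi$ is additive is a composite of three standard facts, each to be verified against the relevant comultiplication: post-composition with the fixed map $\Phi$ respects the group structure on $[G,-]$ coming from the co-$H$ structure of the source $G$; pre-composition with $\theta_{\rm dom}$ and post-composition with $\theta_{\rm cod}^{-1}$ preserve these structures, the coherence homeomorphisms being co-$H$ maps; and $(-)\wedge 1_V$ is additive because smashing with $V$ carries the suspension comultiplication of $S^{|M|}$ to that of $S^{|M|}\wedge V$.

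The main obstacle is precisely this additivity bookkeeping: one must keep careful track of which suspension comultiplication defines each sum and check that the smash $(-)\wedge 1_V$ and the coherence shuffle $\theta_{\rm dom},\theta_{\rm cod}$ intertwine them, so that the integer coefficients $a_j$ transfer without alteration. A secondary point that must be pinned down is that a single decomposition from Theorem \ref{decomposition} simultaneously computes $w_{M_j}$ and $w_{N_i}$ through the same $\Phi$; this is what allows the two fillings to be compared at all, and it is legitimate because the decomposition depends only on the fillability of $K$, while the composite $\Phi\circ(\Sigma g_L\wedge 1_{\widehat X})$ depends on $L$ solely through $g_L$.
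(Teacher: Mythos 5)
Your proposal is correct and is essentially the paper's own proof written out in full detail: the paper likewise converts the hypothesis into the relation $\Sigma g_{N_i}\wedge 1_{\widehat{X}}=a_1\Sigma g_{M_1}\wedge 1_{\widehat{X}}+\cdots+a_r\Sigma g_{M_r}\wedge 1_{\widehat{X}}$ using that $\widehat{X}$ is a suspension, and then post-composes with the fixed map from Theorem \ref{Whitehead product}. Your explicit construction of the homomorphism $\Psi$ and the coherence bookkeeping merely make precise what the paper's two-sentence argument leaves implicit.
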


  \begin{proof}
    Since $\Sigma^2 g_{N_i}=a_1\Sigma^2 g_{M_1}+\cdots+a_r\Sigma^2 g_{M_r}$ and each $X_i$ is a suspension, we have
    \[
      \Sigma g_{N_i}\wedge 1_{\widehat{X}}=a_1\Sigma g_{M_{1}}\wedge 1_{\widehat{X}}+\cdots+a_r\Sigma g_{M_{r}}\wedge 1_{\widehat{X}}.
    \]
    Thus the identity in the statement is obtained by Theorem \ref{Whitehead product}.
  \end{proof}

  By Lemma \ref{pure filling} and Theorem \ref{main}, we get:

  \begin{corollary}
    \label{main pure}
    Let $K$ be a fillable complex with pure fillings $\{M_1,\ldots,M_r\}$ and $\{N_1,\ldots,N_r\}$ such that each of $M_i$ and $N_i$ is equipped with a contraction ordering. If each $X_i$ is a suspension., then for each $i=1,\ldots,r$,
    \[
      w_{N_i}=a_1w_{M_1}+\cdots+a_rw_{M_r},
    \]
    where $\partial N_i=\partial(a_1M_1+\cdots+a_rM_r)$ in the simplicial chain complex of $\overline{K}$.
  \end{corollary}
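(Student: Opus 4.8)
The plan is to combine Lemma \ref{pure filling} with Theorem \ref{main}, so the argument amounts to aligning the hypotheses of these two results and invoking them in turn. First I would apply Lemma \ref{pure filling}, which is available here verbatim because both fillings $\{M_1,\ldots,M_r\}$ and $\{N_1,\ldots,N_r\}$ are assumed to be pure. For each fixed $i$, the lemma produces simultaneously two identities sharing a common set of integer coefficients $a_1,\ldots,a_r$: the chain-level relation
\[
  \partial N_i=a_1\partial M_1+\cdots+a_r\partial M_r
\]
in the simplicial chain complex of $\overline{K}$, and the map-level relation
\[
  \Sigma^2 g_{N_i}=a_1\Sigma^2 g_{M_1}+\cdots+a_r\Sigma^2 g_{M_r}.
\]
By linearity of the boundary operator, the first of these is precisely the relation $\partial N_i=\partial(a_1M_1+\cdots+a_rM_r)$ recorded in the statement.

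Next I would feed the map-level relation into Theorem \ref{main}. Its sole hypothesis is exactly $\Sigma^2 g_{N_i}=a_1\Sigma^2 g_{M_1}+\cdots+a_r\Sigma^2 g_{M_r}$, and the standing assumption that each $X_i$ is a suspension is in force, as is the fact that each $M_i$ and $N_i$ carries a contraction ordering (needed merely to define the iterated products $w_{M_i}$ and $w_{N_i}$). Theorem \ref{main} then delivers $w_{N_i}=a_1w_{M_1}+\cdots+a_rw_{M_r}$, which is the desired identity of iterated (higher) Whitehead products, for each $i=1,\ldots,r$.

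The only point requiring care, and the reason purity is imposed, is that the coefficients $a_j$ in the chain identity must be \emph{the same} as those in the map identity; otherwise the conclusion would not follow. This matching is already guaranteed inside Lemma \ref{pure filling}: purity forces all minimal non-faces to have equal cardinality, so the cycles $\partial M_j$ lie in a single chain group and the spheres $S^{|M_j|-1}$ share a common dimension. Hence the Hurewicz images of the $\Sigma g_{M_j}$ sit in one homology group, where the linear relation coming from the homotopy equivalence of Lemma \ref{suspension K} has unambiguous coefficients, and the Hurewicz theorem transports those coefficients intact into homotopy. I therefore expect no genuine obstacle here: the substantive work has been discharged by the two quoted results, and the corollary is their immediate formal consequence.
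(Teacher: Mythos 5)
Your argument is correct and follows essentially the same route as the paper: Lemma \ref{pure filling} supplies both the chain identity and the map identity $\Sigma^2 g_{N_i}=a_1\Sigma^2 g_{M_1}+\cdots+a_r\Sigma^2 g_{M_r}$ with matching coefficients, and Theorem \ref{main} then converts the latter into the identity of iterated (higher) Whitehead products. (The proof actually printed under this corollary appears to be a misplaced copy of the argument for the sphere case, but the sentence introducing the corollary confirms that your Lemma \ref{pure filling} plus Theorem \ref{main} route is the intended one.)
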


  \begin{proof}
    Clearly, there is an identity $\partial\sigma_r=\partial(\epsilon_1\sigma_1+\cdots+\epsilon_{r-1}\sigma_{r-1})$ as a simplicial chain for some $\epsilon_1,\ldots,\epsilon_r=\pm 1$. This relation readily implies $\Sigma^2 g(\sigma_r)=\epsilon_1\Sigma^2 g(\sigma_{1})+\cdots+\epsilon_{r-1}\Sigma^2 g(\sigma_{r-1})$. then the proof is finished by Theorem \ref{main}.
  \end{proof}

  By Example \ref{sphere}, Corollary \ref{main pure} specializes to:

  \begin{corollary}
    \label{main sphere later}
    Let $K$ be the $(n-1)$-skeleton of a simplicial $n$-sphere $S$ with $n$-simplices $\sigma_1,\ldots,\sigma_r$. If each $X_i$ is a suspension, then
    \[
      w_{\sigma_r}=\epsilon_1w_{\sigma_1}+\cdots+\epsilon_{r-1}w_{\sigma_{r-1}},
    \]
    where $\epsilon_1,\ldots,\epsilon_{r-1}=\pm 1$ such that $\partial\sigma_r=\partial(\epsilon_1\sigma_1+\cdots+\epsilon_{r-1}\sigma_{r-1})$ in the simplicial chain complex of $S$.
  \end{corollary}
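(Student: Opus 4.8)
The plan is to realize this statement as a direct instance of Corollary \ref{main pure}, so that the work reduces to two elementary points: (i) exhibiting two pure fillings of $K$ whose members are the $n$-simplices of $S$ and that between them cover $\sigma_r$, and (ii) producing the chain identity $\partial\sigma_r=\partial(\epsilon_1\sigma_1+\cdots+\epsilon_{r-1}\sigma_{r-1})$ with signs $\epsilon_i=\pm1$ in the simplicial chain complex of $S$. All the homotopy-theoretic substance is already carried by the earlier results, so the proof is a specialization.

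First I would set up the fillings. By Example \ref{sphere}, $K$ is fillable and $\{\sigma_1,\ldots,\sigma_{r-1}\}$ is a filling, since $|K\cup\sigma_1\cup\cdots\cup\sigma_{r-1}|$ is $|S|$ with the barycenter of $\sigma_r$ deleted, hence contractible. Running the same argument while deleting the barycenter of $\sigma_{r-1}$ instead shows that $\{\sigma_1,\ldots,\sigma_{r-2},\sigma_r\}$ is also a filling. Both are pure, because every $\sigma_i$ is an $n$-simplex, so $|\sigma_i|=n+1$ is constant. I would fix, once and for all, a contraction ordering of each $\sigma_i$ (which exists by Corollary \ref{contraction ordering 2}) and use the same ordering wherever $\sigma_i$ occurs in either filling, so that the symbol $w_{\sigma_i}$ is unambiguous.

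Next I would identify the relevant chain complex. The simplices of $S$ not lying in its $(n-1)$-skeleton $K$ are exactly the top simplices $\sigma_1,\ldots,\sigma_r$, and each of these has all of its facets in $K$; hence the minimal non-faces of $K$ are precisely $\sigma_1,\ldots,\sigma_r$ and $\overline{K}=S$. Consequently the simplicial chain complex of $\overline{K}$ appearing in Corollary \ref{main pure} is literally that of $S$. Because $|S|\cong S^n$, we have $H_n(S)\cong\Z$, represented by a fundamental cycle $z=\eta_1\sigma_1+\cdots+\eta_r\sigma_r$ with all $\eta_i=\pm1$: each $(n-1)$-face of $S$ lies in exactly two top simplices, which forces the coefficients of a generating cycle to agree in absolute value, and orientability lets us normalize them to $\pm1$. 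From $\partial z=0$ and $\eta_r^{-1}=\eta_r$ I then obtain $\partial\sigma_r=\partial(\epsilon_1\sigma_1+\cdots+\epsilon_{r-1}\sigma_{r-1})$ with $\epsilon_i=-\eta_r\eta_i=\pm1$.

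Finally I would feed this into Corollary \ref{main pure} with the first filling $\{\sigma_1,\ldots,\sigma_{r-1}\}$, the second filling $\{\sigma_1,\ldots,\sigma_{r-2},\sigma_r\}$, and the distinguished member $\sigma_r$ of the second filling, whose boundary is expressed by the chain relation above; the corollary then yields $w_{\sigma_r}=\epsilon_1 w_{\sigma_1}+\cdots+\epsilon_{r-1}w_{\sigma_{r-1}}$, as desired. Here the passage from the chain relation to the relation among the maps $\Sigma^2 g_{\sigma_i}$ is supplied by Lemma \ref{pure filling}, and the passage from the latter to the Whitehead products by Theorems \ref{Whitehead product} and \ref{main}. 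The only genuinely new point, and the one I expect to require the most care, is the orientation bookkeeping that guarantees a fundamental cycle of $S$ with coefficients $\pm1$ and fixes the signs $\epsilon_i$; everything else is a routine application of the machinery already in place.
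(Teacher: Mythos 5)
Your proof is correct and takes essentially the same route as the paper, which likewise obtains the corollary by specializing Corollary \ref{main pure} via Example \ref{sphere}, the only substantive input being the chain identity $\partial\sigma_r=\partial(\epsilon_1\sigma_1+\cdots+\epsilon_{r-1}\sigma_{r-1})$ coming from the fundamental cycle of $S$ (which the paper dispatches with ``clearly'' and you spell out via the pseudomanifold/orientation argument, also making explicit the second pure filling $\{\sigma_1,\ldots,\sigma_{r-2},\sigma_r\}$). One small correction: your claim that $\overline{K}=S$ is false in general (e.g.\ for $S$ the boundary of the octahedron, $\overline{K}$ also contains the three antipodal edges), but this is harmless since $S$ is a subcomplex of $\overline{K}$, so the chain identity in $S$ is in particular an identity in $\overline{K}$ as required by Corollary \ref{main pure}.
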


  We give example computations of the above results.

  \begin{example}
    \label{Jacobi Hardie}
    Let $K$ be the $(m-3)$-skeleton of $\partial\Delta([m])$. Then $K$ has pure fillings
    \[
      \{[m]-i\mid i=1,2,\ldots,m-1\}\quad\text{and}\quad\{[m]-i\mid i=2,3,\ldots,m\}.
    \]
    Let $\sigma_i=[m]-i$ for $i=1,2,\ldots,m$. Then there is an identity
    \[
      \partial\sigma_m=\partial((-1)^{m+2}\sigma_1+\cdots+(-1)^{2m}\sigma_{m-1})
    \]
    in the simplicial chain complex of $S$. Therefore by Corollary \ref{main sphere later}, there is an identity
    \begin{equation}
      \label{Hardie}
      w_{\sigma_m}=(-1)^{m+2}w_{\sigma_1}+\cdots+(-1)^{2m}w_{\sigma_{m-1}}.
    \end{equation}
    Suppose that $m=3$ and $X_i=S^{p_i}$ for $i=1,2,3$. Then it is easy to see that the identity \eqref{Hardie} is exactly the same as the Jacobi identity of Whitehead products
    \[
      (-1)^{p_1p_3}[[e_1,e_2],e_3]+(-1)^{p_1p_2}[[e_2,e_3],e_1]+(-1)^{p_2p_3}[[e_3,e_1],e_2]=0.
    \]
    For $m>3$, if all $X_i$ are sphere, then \eqref{Hardie} coincides with Hardie's identity of higher Whitehead products \cite[Theorem 2.2]{H}. Then our identity is a combinatorial generalization of these two identities.
  \end{example}

  \begin{example}
    \label{cross-polytope}
    Recall that the $n$-dimensional cross-polytope is defined as the convex hull of $2n$ points
    \[
      (\pm 1,0,0,\ldots,0),\,(0,\pm 1,0,\ldots,0),\ldots,(0,0,\ldots,0,\pm 1)
    \]
    in $\R^n$. Then the $n$-dimensional cross polytope is the dual polytope of the $n$-dimensional hypercube. Let $S$ be the boundary of the $(n+2)$-dimensional cross-polytope. We may set the vertex set of $S$ to be $[2n+4]$ such that $\sigma \subset [2n+4]$ is a face of $S$ if and only if there is no $i = 1, 2,\cdots, n+2$ such that $\{ 2i-1, 2i \} \subset \sigma$. Thus a facet of $S$ is given by
    \[
      \sigma(\alpha_1, \cdots, \alpha_{n+2}) = \{ 2 - \alpha_1, 4- \alpha_2, \cdots, 2n+4 - \alpha_{n+2}\},
    \]
    where $\alpha_1, \cdots, \alpha_{n+2}$ are either $0$ or $1$. We consider identities among iterated (higher) Whitehead products in a polyhedral product over the $n$-skeleton $K$ of $S$. Then collections of facets of $S$
    \[
      \{ \sigma(\alpha_1, \cdots, \alpha_{n+2}) \; | \; (\alpha_1, \cdots, \alpha_{n+2}) \ne (1, \cdots, 1)\}
    \]
    and
    \[
      \{ \sigma(\alpha_1, \cdots, \alpha_{n+2}) \; | \; (\alpha_1, \cdots, \alpha_{n+2}) \ne (0, \cdots, 0)\}
    \]
    are pure fillings of $K$. For these pure fillings, we have
    \begin{align*}
      &\sum_{\alpha_1, \cdots, \alpha_{n+2}} (-1)^{\alpha_1 + \cdots + \alpha_{n+2}} \partial \sigma (\alpha_1, \cdots, \alpha_{n+2}) \\
      &=\sum_{\alpha_1, \cdots, \alpha_{n+2}} (-1)^{\alpha_1 + \cdots + \alpha_{n+2}} \bigg( \sum_{i=1}^{n+2} (-1)^{i-1} \{ 2-\alpha_1, \cdots \widehat{2i-\alpha_i}, \cdots, 2n+4 - \alpha_{n+2}\}\bigg) \\
      &=\sum_{\alpha_1, \cdots, \alpha_{n+2}} (-1)^{\alpha_1 + \cdots + \alpha_{n+2} + i - 1} \{ 2-\alpha_1, \cdots, \widehat{2i-\alpha_i}, \cdots, 2n+4-\alpha_{n+2}\}\\
      &=0
    \end{align*}
    in the simplicial chain complex of $S$. Then by Corollary \ref{main sphere later}, we get an identity
    \[
      w_{\sigma(1, \cdots, 1)} = \sum_{(\alpha_1, \cdots, \alpha_{n+2}) \ne (1,\cdots, 1)} (-1)^{\alpha_0 + \cdots + \alpha_n + n + 1} w_{\sigma(\alpha_1, \cdots, \alpha_{n+2})}.
    \]
  \end{example}

  \begin{example}
    \label{RP^2}
    This example considers a simplicial complex which is not a skeleton of a simplicial sphere. Let $K$ be the following graph, where we identify verticed and edges with the same names.
    \begin{center}
      \begin{tikzpicture}[x=0.7cm, y=0.7cm, thick]
        \draw(0,0.1)--(-2,1.1)--(-2,3.5)--(0,4.5)--(2,3.5)--(2,1.1)--(0,0.1);
        \draw(-2,1.1)--(2,1.1)--(0,4.5)--(-2,1.1);
        \draw(-2,3.5)--(-1,2.8);
        \draw(2,3.5)--(1,2.8);
        \draw(-1,2.8)--(1,2.8)--(0,1.1)--(-1,2.8);
        \draw(0,0)--(0,1.1);
        \fill[black](-1,2.8)circle(2pt)node[below=1.5pt, left=2pt]{$4$};
        \fill[black](1,2.8)circle(2pt)node[below=1.5pt, right=2pt]{$6$};
        \fill[black](0,1.1)circle(2pt)node[above=3.5pt]{$5$};
        \fill[black](0,0.1)circle(2pt)node[below=2pt]{$3$};
        \fill[black](-2,1.1)circle(2pt)node[left=2pt]{$2$};
        \fill[black](2,1.1)circle(2pt)node[right=2pt]{$1$};
        \fill[black](-2,3.5)circle(2pt)node[left=2pt]{$1$};
        \fill[black](2,3.5)circle(2pt)node[right=2pt]{$2$};
        \fill[black](0,4.5)circle(2pt)node[above=2pt]{$3$};
      \end{tikzpicture}
    \end{center}
    Then $K$ is the 1-skeleton of a six vertex triangulation of $\R P^2$. We abbreviate $\{i,j,k\}$ by $ijk$ for $i,j,k=1,2,\ldots,6$. Let
    \[
      \mathcal{F}=\{124,\,126,\,134,\,135,\,156,\,235,\,236,\,245,\,346,\,456\}.
    \]
     Since $(\mathcal{F}-\sigma)\cup\{123\}$ is a filling of $K$ for each $\sigma\in\mathcal{F}$, $K$ is fillable. In the simplicial chain complex of $\overline{K}$, we have
    \begin{align*}
      \partial (456) &= 2 \partial (123) - \partial (124) - \partial (126) + \partial(134) + \partial (135)\\
      &\quad+ \partial (156) - \partial (235) - \partial (236) + \partial (245) - \partial (346).
    \end{align*}
    Then by Corollary \ref{main pure}, we obtain
     \[
      w_{456}=2 w_{123} - w_{124} - w_{126} + w_{134} + w_{135} + w_{156} - w_{235} - w_{236} + w_{245} - w_{346}.
    \]
    This shows that coefficients in the identity are not $\pm 1$ in general, while they are $\pm 1$ if $K$ is the $(n-1)$-skeleton of a simplicial $n$-sphere as in Corollary \ref{main sphere later}.
  \end{example}

\end{document}